\def\newaliasedtheorem#1[#2]#3{
	\newaliascnt{#1@alt}{#2}
	\newtheorem{#1}[#1@alt]{#3}
	\expandafter\newcommand\csname #1@altname\endcsname{#3}
}
\theoremstyle{plain}
\newtheorem{theorem}{Theorem}[section]
\theoremstyle{definition}
\theoremstyle{remark}
\newcommand{\R}{\mathbb{R}}
\newcommand{\N}{\mathbb{N}}
\newcommand{\W}{\mathbb{W}}
\newcommand{\C}{\mathbb{C}}
\newcommand{\G}{\mathbb{G}}
\newcommand{\mm}{{\mathfrak D}}
\let\altphi\phi
\let\phi\varphi
\let\varphi\altphi
\let\altphi\undefined
\newcommand{\average}{{\mathchoice {\kern1ex\vcenter{\hrule height.4pt
width 6pt
depth0pt} \kern-9.7pt} {\kern1ex\vcenter{\hrule height.4pt width 4.3pt
depth0pt}
\kern-7pt} {} {} }}
\address{\textsc{Daniela Di Donato}: 
Dipartimento di Ingegneria Industriale e Scienze Matematiche, Via Brecce Bianche, 12 60131 Ancona, Universit\'a Politecnica delle Marche.}
\email{d.didonato@staff.univpm.it}
\title{Non-symmetric intrinsic Hopf-Lax semigroup vs. intrinsic Lagrangian}
\date{\today}
\author{ Daniela Di Donato }
\begin{document}

\begin{abstract}

		In this paper, we analyze the 'symmetrized' of the intrinsic Hopf-Lax semigroup introduced by the author in the context of the intrinsically Lipschitz sections in the setting of metric spaces. Indeed, in the usual case, we have that $d(x,y) =d(y,x)$ for any point $x$ and $y$ belong to the metric space $X;$ on the other hand, in our intrinsic context, we have that $d(f(x),\pi^{-1} (y)) \ne d(f(y),\pi^{-1} (x)),$ for every $x,y \in X.$ Therefore, it is not trivial that we get the same result obtained for the "classical" intrinsic Hopf-Lax semigroup, i.e., the 'symmetrized'  Hopf-Lax semigroup is a subsolution of Hamilton-Jacobi type equation. Here, an important observation is that $f$ is just a continuous section of a quotient map $\pi$ and it can not intrinsic Lipschitz.  
		
		However, following Evans, the main result of this note is  to show that the "new" intrinsic Hopf-Lax semigroup satisfies a suitable variational problem where the functional contained an intrinsic Lagrangian. Hence,  we also define and prove some basic properties of the  intrinsic Fenchel-Legendre transform of this intrinsic Lagrangian that depends on a continuous section of $\pi$.
		
		\end{abstract}

\maketitle 
\tableofcontents
%\newpage

\section{Introduction} 	
Find out a good notion of rectifiability in subRiemannian Carnot groups \cite{ABB, BLU, CDPT} is unanswered questions which have continued to perplex many mathematicians for decades. Starting from a negative result by Ambrosio and Kirchheim \cite{AmbrosioKirchheimRect} (see also \cite{MAGNANI1}), Franchi, Serapioni and Serra Cassano introduced the notion of intrinsically Lipschitz maps \cite{FSSC, FSSC03, MR2032504}  (see also  \cite{SC16, FS16}) in order to give positive results to  this question. This is one possible positive solution; the reader can also see \cite{AM2022.1, AM2022.2, DB22, DS91a, DS91b, DD19, MR2247905, MR2048183, NY18}. 

The simple idea of intrinsic graph in the FSSC sense is the following one: let $\mathbb{V}$ and $\W$ be complementary homogeneous subgroups of a Carnot group $\G$, i.e., $\W \cap \mathbb{V}= \{ 0 \}$ and $\G=\W\cdot \mathbb{V}$ (where $\cdot$ is the group operation given by the Campbell-Hausdorff formula), then the intrinsic left graph of $\phi :\W\to \mathbb{V}$ is the set $$ \mbox{graph} {(\phi)}:=\{ x\cdot \phi (x) \, |\, x\in \W \}.$$ 
 A function $\phi $ is said to be intrinsic Lipschitz if it is possible to put, at each point $p\in \mbox{graph} {(\phi)}$, an intrinsic cone with vertex $p$, axis $\mathbb{V}$ and fixed opening, intersecting $\mbox{graph} {(\phi)}$ only at $p$.

Moreover, in \cite{FSSC, MR2032504}, the authors introduce the notion of  intrinsic $C^1$ surface adapting to groups De Giorgi's classical technique %\cite{DeGiorgi}
 valid in Euclidean spaces to show that the boundary of a finite perimeter set can be seen as a countable union of $C^1$ regular surfaces. A set $S$ is a $d$-codimensional intrinsic $C^1$ surface   if there exists a continuous function $f:\G \to \R^d$ such that,  locally, $$S=\{ p\in \G : f(p)=0\},$$ and the horizontal jacobian of $f$ has maximum rank, locally (see also \cite{DD19}).
  
These two approaches are natural counterparts of the notions of rectifiability in Euclidean spaces, where their  equivalence follows from well-known theorems as, for instance, Rademacher Theorem. Hence, it is surprising that the connection between these two notions of rectifiability is poorly  understood already in  Carnot groups of step 2. \\

Starting from the seminal papers by Franchi, Serapioni and Serra Cassano, in metric spaces, Le Donne and the author \cite{DDLD21} introduced and studied the so-called intrinsically Lipschitz sections (see Section \ref{Intrinsically   Lipschitz  sections.21}) which generalize the intrinsically Lipschitz maps in the FSSC sense. %More precisely,  in metric spaces, Le Donne and the author give a 'different' notion of Lipschitz graph thanks to the simple observation: we consider $graphs$ instead of  $maps.$
  The difference between the two approaches is that Franchi, Serapioni and Serra Cassano study the properties of intrinsically Lipschitz maps; while we study the ''sections'' or rather the properties of the graphs that are intrinsic Lipschitz. Roughly speaking, in our approach, a section $\psi$ is such that $\mbox{graph}(\phi) =\psi (Y) \subset X$ where $X$ is a metric space and $Y$ is a topological space. In \cite{DDLD21}, we prove the Ascoli-Arzel\'a theorem, the Ahlfors-David regularity, the link between intrinsic Lipschitz sections and level sets of biLipschitz maps on fibers, as well as the Extension theorem. In \cite{D22.1}, we also prove other properties, following Cheeger's idea \cite{C99} (see also \cite{KM16, K04}), like: convexity and being vector space over $\R$ or $\C$ for a suitable class of these sections; we give an equivalence relation for these sections.
%
%In \cite{C99}, Cheeger proves a Rademacher's type theorem for the classical Lipschitz maps. Consequently, an open problem could be to obtain \textbf{a Rademacher type theorem \'a la Cheeger } for the intrinsically Lipschitz sections. However, we already know that additional hypotheses will be needed because, in our model case (i.e., Carnot groups), Julia, Nicolussi Golo and Vittone \cite{biblioJNGV} give counterexamples to a Rademacher theorem. %Hence a natural question is if it is possible to get  a Rademacher theorem with suitable additional hypothesis.
%
%  
% This project fits to an active line of research \cite{Pansu, Vittone20, FMS14} on Rademacher type theorems, but it also studies connections to different notions and mathematical areas like Cheeger energy, Sobolev spaces, Optimal transport theory. The reader can see \cite{ACDM15, AGS08.3, AGS08.4, AES16, CEMCS, S06, MR2480619, MR2459454, H96, S00, K04, HKST15, Savar1, Savar2}. 
% 

In our context a section $\phi :Y \to X$ of  $\pi:X \to Y$ is such that $\pi$ produces a foliation for $X,$ i.e., $X= \coprod \pi ^{-1} (y)$ and the Lipschitz property of $\phi$ consists to ask that the distance between two points $\phi (y_1), \phi (y_2)$ is not comparable with the distance between $y_1$ and $y_2$ but between $\phi (y_1)$ and the fiber of $y_2$.  Following this idea and recall that the Hopf-Lax formula
\begin{equation*}
F(y,t)= \inf_{z\in M} \left\{ f(z) + \frac{d^2(y,z)}{2t} \right\},
\end{equation*}
is the viscosity solution to the Hamilton-Jacobi equation on a compact Riemannian manifold $M,$ i.e.,
  \begin{equation*}
\frac{\partial F}{\partial t} + \frac{|\nabla F|^2}{2}=0.
\end{equation*}
(where $f:M \to \R$ is the initial continuous condition, $d$ is the geodesic distance on $M$ and $t\in \R^+),$ it is natural to define the intrinsic Hopf-Lax semigroup as follows. Let $X=\R^\kappa$, $Y\subset \R^\kappa$ be bounded and $\pi :X \to Y$ a quotient map. 
The intrinsic Hopf-Lax semigroup is the family of operators $(iQ_\cdot)_{t>0}$ defined as 
\begin{equation}\label{equation231.old}
f \,\, \mapsto \,\, iQ_tf(y) := \inf_{z\in Y} \left\{ \max_{j=1,\dots , \kappa} f_j(z) +\frac 1 {2t} d^2(f(z), \pi^{-1} (y)) \right\}.
\end{equation}
for any continuous section $f:Y \to X$ of $\pi.$ We also consider the case $\kappa >1$ because being intrinsically Lipschitz is equivalent to Lipschitz in the classical sense  when we consider the basic case $X=Y=\R.$ Moreover, I underline that, in most cases, $f$ is just a continuous section of $\pi$ and hence we do not ask for its intrinsic Lipschitz condition.

This is the topic of \cite{D22Hopf} where, following  \cite{AGS08.2, LV07}, we prove the  link between the intrinsic Hopf-Lax semigroup and  the intrinsic slope and we also show that the intrinsic Hopf-Lax semigroup is a subsolution of Hamilton-Jacobi type equality. Yet, in the usual case, the first result is one step in order to analyze  when a Dirichlet form is regular (i.e., when it coincides with the Cheeger energy) and thus a natural question may be whether the results known in \cite{ACDM15, AES16, AGS08.1,  AGS08.2, AGS08.3, AGS08.4, BGL01, KSZ14, KZ12, FOT10, Savar1} can be fitted into our intrinsic context.

However, in this paper, we analyze a sort of 'symmetrized' of \eqref{equation231.old}. Indeed, in the usual case, we have that the symmetric property of $d$ is true, i.e., $d(x,y) =d(y,x)$ for any point $x,y \in X;$ on the other hand, in our intrinsic context, it is easy to see that $$d(f(x),\pi^{-1} (y)) \ne d(f(y),\pi^{-1} (x)),$$ for every $x,y \in X.$ In particular, it holds
\begin{equation}\label{equationDIFFERENZA.22}
\begin{aligned}
d(f(y), \pi^{-1} (x)) -d(f(z), \pi^{-1} (x))  & \leq d(f(y),f(z)), \quad \forall x,y,z \in Y\\
d(f(x), \pi^{-1} (y)) -d(f(x), \pi^{-1} (z)) & \nleq  d(f(y),f(z)), \quad \mbox{for some } x,y,z \in Y.\\
\end{aligned}
\end{equation}

Indeed, for any fixed $x,y,z \in Y,$ if we choose $a \in \pi^{-1} (x)$ such that
\begin{equation*}
d(f(z), \pi^{-1} (x)) =d(f(z), a),
\end{equation*}
we get that
\begin{equation}\label{equation23.servira} \begin{aligned}
d(f(y), \pi^{-1} (x)) -d(f(z), \pi^{-1} (x)) & \leq  d(f(y),a)  -d(f(z), \pi^{-1} (x)) \\ & \leq   d(f(y),f(z)) +d(f(z), a ) -d(f(z), \pi^{-1} (x)) \\
& = d(f(y),f(z)),\\
\end{aligned}
\end{equation}  i.e., the first inequality of \eqref{equationDIFFERENZA.22} holds. 
On the other hand, for the second inequality in \eqref{equationDIFFERENZA.22}, we give the following example. let $X\subset \R^2$ the set given by the two lines with vertex $(0,8), (8,8)$ and $(0,3),(8,7)$ and the subset $Y$ of $\R^2$ defined as the line with vertex $(0,0)$ and $(8,0).$ If we consider  a continuous section $f:Y \to X$ of the projection $\pi :X\to Y$ with $f(x)=f((1,0))=(1,4), f(y)=f((7,0))=(8,7)$ and $f(z)=f((6,0))=(8,6),$ it is easy to see that
\begin{equation*} \begin{aligned}
d(f(x), \pi^{-1} (y)) -d(f(x), \pi^{-1} (z)) & = \sqrt{\frac{5}{4}}, \\
 d(f(y),f(z)) &= 1,
\end{aligned}
\end{equation*} 
and so \begin{equation*} \begin{aligned}
d(f(x), \pi^{-1} (y)) -d(f(x), \pi^{-1} (z)) & \nleq  d(f(y),f(z)).\\
\end{aligned}
\end{equation*}

Then, it is not trivial to define 
\begin{equation}\label{equationHOPFLAXSYMMETRIZZATO}
u(y,t) = \inf_{z\in Y} \left\{ \frac{d^2(f(y), \pi^{-1} (z))}{2t}+g(z) \right\}.
\end{equation}
and to prove that it is a subsolution of Hamilton-Jacobi type equation (see Corollary \ref{provaHJno}). %In order to do this we also define and prove some basic properties of an intrinsic Lagrangian and so of the  intrinsic Fenchel-Legendre transform. 

However, the main object of this paper is defined in \eqref{equationu21} and \eqref{equationHOPFLAXSYMMETRIZZATO} is our model case. 
The main results of this paper are Theorem \ref{theoremLagrangian19} and Proposition \ref{proptransformFL} (3). Here, first we define a suitable map $u$ (see \eqref{equationu21}) which is different to \eqref{equation231.old} because we change $ d^2(f(z), \pi^{-1} (y))$ instead of  $ d^2(f(y), \pi^{-1} (z)),$ for $y,z \in X=\R^\kappa .$ Then, following  \cite{DB09, E10}, we prove that $u$ is solution of the following variational problem
\begin{equation*}
\inf_{w \in C(\R)} \left\{ \int _0^t L(\dot w(s)) \, ds + g(w(0))\,|\, w(t)= w(0) + d(f(y) , \pi^{-1} (w(0))) \right\},
\end{equation*}
and satisfies the boundary condition $u=g \mbox{ on } Y \times \{t=0\},$ where $g:=\max_{j=1,\dots , \kappa } f_j.$ In the usual case, $L$ is the Lagrangian of $u$ and so we call it "intrinsic Lagrangian". We ask nothing regarding asymptoticity of $L$ but we ask its convexity. However, the reader can see Section \ref{Properties of L19} about the properties of our intrinsic Lagrangian which in our model case corresponds to $ L(v)=v^2$ with $v\geq 0.$
 
Finally, following \cite{DB09, E10}, we define and prove some basic properties of the intrinsic Fenchel-Legendre transform as follows (see Section \ref{The intrinsic Fenchel-Legendre transform}).
\begin{defi}
Let $y\in Y, t>0$ and  $f: Y \to \R^\kappa$ be a continuous section of $\pi.$ The intrinsic Fenchel-Legendre transform of $L$ is a map $L^*: [0,Ils(f)] \to \R$ given by 
\begin{equation}
 L^*(\xi)= L^*_{f, y,t}(\xi):= \sup_{\substack{ z\in Y\, \\ w= \frac{d(f(y), \pi^{-1} (z))}{t} \,  \in \R^+ } } ( \xi w- L\left(w\right)).
\end{equation}
\end{defi}
As in classical way, we define the intrinsic Hamiltonian $H$ associated by $L$ as the  intrinsic Fenchel-Legendre transform of $L$ and so $$H(\xi) :=L^*(\xi), $$  for any $\xi \in [0,Ils(f)].$

 {\bf Acknowledgements.}  We would like to thank Professor Giuseppina Autuori for the references \cite{DB09, E10}.

\section{Intrinsically   Lipschitz  sections}\label{Intrinsically   Lipschitz  sections.21}
\subsection{Intrinsically   Lipschitz  sections} The notion of intrinsically Lipschitz maps was introduced by Franchi, Serapioni and Serra Cassano  \cite{FSSC, FSSC03, MR2032504}  (see also  \cite{SC16, FS16}) in the context of subRiemannian Carnot groups after a negative result given by Ambrosio and  Kirchheim \cite{AmbrosioKirchheimRect}. Their aim is to establish a good definition of rectifiability in Carnot groups.

Here we present a generalization of this concept introduced in \cite{DDLD21}. Our setting is the following. We have a metric space $X$, a topological space $Y$, and a 
quotient map $\pi:X\to Y$, meaning
continuous, open, and surjective.
%In some situations we might assume that $Y$ is metrizable so that $\pi$ becomes a Lipschitz quotient,  e.g. a submetry. normal
The standard example for us is when $X$ is a metric Lie group $G$ (meaning that the Lie group $G$ is equipped with a left-invariant distance that induces the manifold topology), for example a subRiemannian Carnot group, %$N\lhd G$ 
and $Y$ if the space of left cosets $G/H$, where 
$H<G$ is a  closed subgroup and $\pi:G\to G/H$ is the projection modulo $H$, $g\mapsto gH$.
\begin{defi}\label{def_ILS} 
We say that a map $\phi :Y \to X$ is a section of $\pi $ if 
\begin{equation}\label{equation1}
\pi \circ \phi =\mbox{id}_Y.
\end{equation}
Moreover, we say that a map $\phi:Y\to X$ is an {\em intrinsically Lipschitz section of $\pi$ with constant $L$},  with $L\in[1,\infty)$, if in addition
\begin{equation}\label{equationFINITA}
d(\phi (y_1), \phi (y_2)) \leq L d(\phi (y_1), \pi ^{-1} (y_2)), \quad \mbox{for all } y_1, y_2 \in Y.
\end{equation}
Here $d$ denotes the distance on $X$, and, as usual, for a subset $A\subset X$ and a point $x\in X$, we have
$d(x,A):=\inf\{d(x,a):a\in A\}$.
\end{defi}
A first observation is that this class is contained in the class of continuous maps (see \cite[Section 2.4]{DDLD21}) but  cannot be uniformly continuous (see Example 1.2 in \cite{D22.24june}). 
 In the case  $ \pi$ is a Lipschitz quotient or submetry \cite{MR1736929, Berestovski},  being intrinsically Lipschitz  is equivalent to biLipschitz embedding, see Proposition 2.4 in \cite{DDLD21}. Moreover, since $\phi$ is injective by \eqref{equation1}, the class of Lipschitz sections not include the constant maps.

   \begin{rem}
   If $Y$ is bounded, we get that \begin{equation}\label{costuniversale}
K:= \sup _{y_1,y_2 \in Y}  d(\phi (y_1), \pi ^{-1} (y_2)) <+ \infty .
\end{equation}
This follows because, on the contrary, if $K=+ \infty,$ then we get the contradiction $+ \infty= d(\phi (y_1), \pi ^{-1} (y_2))  \leq d(\phi (y_1), \phi (y_2)).$
  \end{rem}

\subsection{Intrinsically   Lipschitz  constants}
 We recall the definition of the intrinsically Lipschitz constants as in \cite{D22.31may, D22.29}, where we have adapted the theory of  \cite{C99, DM} in our intrinsic case.

\begin{defi}\label{def_ILS.1} Let $\phi:Y\to X$ be a section of $\pi$. Then we define 
%Let $(X,d)$ be a metric space and let $Y$ be a topological space. We say that a map $\phi :Y \to X$ is a section of a quotient map $\pi :X \to Y$ if
\begin{equation*}
ILS (\phi):= \sup _{\substack{y_1, y_2 \in Y \\ y_1\ne y_2}} \frac{d(\phi (y_1), \phi (y_2))}{  d(\phi (y_1), \pi ^{-1} (y_2)) } \in [0, + \infty ]
\end{equation*}
and
\begin{equation*}
\begin{aligned}
ILS (Y,X,\pi ) &:= \{ \phi :Y \to X \,:\, \phi \mbox{ is an intrinsically Lipschitz section of $\pi$ and }  ILS(\phi) < +\infty \},\\
ILS_{b} (Y,X,\pi) & := \{ \phi \in  ILS (Y,X,\pi) \,:\, \mbox{spt}(\phi) \mbox{ is bounded} \}.\\
\end{aligned}
\end{equation*}
\textbf{For simplicity, we will write $ILS (Y,\R ^\kappa)$ instead of  $ILS (Y,\R ^\kappa ,\pi ).$}
\end{defi}

\begin{defi}\label{def_ILS.2} Let $\phi:Y\to X$ be a  section of $\pi$. Then we define the local intrinsically Lipschitz constant (also called intrinsic slope) of $\phi$ the map $Ils (\phi):Y \to [0,+\infty )$ defined as 
%Let $(X,d)$ be a metric space and let $Y$ be a topological space. We say that a map $\phi :Y \to X$ is a section of a quotient map $\pi :X \to Y$ if
\begin{equation*}
Ils (\phi) (z):= \limsup _{y\to z} \frac{d(\phi (y), \phi (z))}{  d(\phi (y), \pi ^{-1} (z)) },
\end{equation*}
if $z \in Y$ is an accumulation point; and $Ils (\phi) (z):=0$ otherwise.
\end{defi}

\begin{defi}\label{def_ILS.3} Let $\phi:Y\to X$ be a section of $\pi$. Then we define the asymptotic  intrinsically Lipschitz constant of $\phi$ the map $Ils_a (\phi):Y \to [0,+\infty )$ given by
%Let $(X,d)$ be a metric space and let $Y$ be a topological space. We say that a map $\phi :Y \to X$ is a section of a quotient map $\pi :X \to Y$ if
\begin{equation*}
Ils_a (\phi) (z):= \limsup _{y_1,y_2\to z}\frac{d(\phi (y_1),\phi (y_2))}{  d(\phi (y_1), \pi ^{-1} (y_2)) }
\end{equation*}
if $z \in Y$ is an accumulation point and $Ils (\phi) (z):=0$ otherwise.
\end{defi}
 
 \begin{rem}\label{defrem} Notice that by $\phi (y_2) \in \pi ^{-1} (y_2),$ it is trivial that $ d(\phi (y_1), \pi ^{-1} (y_2)) \leq d(\phi (y_1), \phi (y_2))$ and so $Ils(\phi) \geq 1.$ Moreover, it holds 
\begin{equation*}
 Ils (\phi ) \leq Ils_a (\phi ) \leq ILS(\phi).
\end{equation*}
\end{rem}

\section{The intrinsic Lagrangian $L$}

\subsection{Properties of L}\label{Properties of L19}
We underline that our standard $L$ has the following form:
\begin{equation*}
tL\left(\frac{d(f(x), \pi^{-1} (z))}{t} \right) = \frac{d^2(f(x), \pi^{-1} (z))}{2t}, 
\end{equation*} for any $x,z \in Y.$ Hence, we want that $L$ satisfies the following three properties:
\begin{itemize}
\item As in the classical case, we ask that $L$ is convex map.
\item It holds
\begin{equation}\label{equationPorpL}
tL\left(\frac{d(f(y), \pi^{-1} (z))}{t} \right) - tL \left(\frac{d(f(x), \pi^{-1} (z))}{t} \right) \leq  2K\sqrt L\left(\frac{d(f(y),f(x))}{t}\right),
\end{equation} for any $x,y ,z \in Y$ and $t>0.$  
This is because the following observation: for any $x,y,z \in Y,$ by \eqref{equation23.servira}, we know that  $d(f(y), \pi^{-1} (x)) -d(f(z), \pi^{-1} (x))  \leq d(f(y),f(z)), \quad \forall x,y,z \in Y$  and so if $Y$ is bounded, then we have that 
\begin{equation*}
d^2(f(y), \pi^{-1} (z)) -d^2(f(x), \pi^{-1} (z)) \leq  2Kd(f(y),f(x)),
\end{equation*}
 for any $x,y, z \in Y,$ where $K>0$ is given by \eqref{costuniversale}.
 \item As in the usual case, by the simply fact that if $0<s<t,$ then $1/s>1/t$ we ask
\begin{equation}\label{equationPorpL.2}
tL\left(\frac{d(f(x), \pi^{-1} (z))}{t} \right)  \leq  s L \left(\frac{d(f(x), \pi^{-1} (z))}{s} \right),
\end{equation} for any $x,z \in Y.$
\end{itemize}

\subsection{Properties of the solution $u$}
The purpose of this section is to follows the theory in  \cite{DB09, E10} where the authors  investigate the sense in which function $\ell$ defined as 
\begin{equation*}
\ell(y,t) := \inf \left\{ \int _0^t L(\dot w(s)) \, ds + g(w(0))\,|\, w(t)=y \right\},
\end{equation*}

 (see \cite[Section 3.3]{E10}) actually solves the initial-valued problem for the Hamilton-Jacobi PDE:
\begin{equation}\label{CondChowVerificata8}
\left\{
\begin{array}{l}
\ell_t +H(D\ell)=0 \quad \mbox{in } \R^\kappa \times (0,\infty ), \\ 
\ell=g \quad \mbox{on } \R^\kappa \times \{t=0\}, \\ 
\end{array}
\right.
\end{equation}
where $H$ is a smooth map such that
\begin{enumerate}
\item $H$ is convex;
\item $\lim_{|p|\to \infty } \frac{ H(p)}{|p|} =+\infty;$
\end{enumerate}
and $g:\R^\kappa \to \R$ is a Lipschitz map in the usual sense.

The main aims of this paper is to adapt this theory in our intrinsic context. More precisely, we prove that a suitable map $u$ defined as \eqref{equationu21} is a subsolution of Hamilton-Jacobi type equation and that this map is equal to $g$ when $t=0$ (see Corollary \ref{provaHJno} and Proposition \ref{proptransformFL} (3)). Moreover, in Theorem \ref{theoremLagrangian19} we show that this map $u$ is also a solution of a suitable variational problem (see \eqref{equationPB17}). 
\\

Our setting is as follows. Let $\pi :\R^\kappa \to Y$ be a quotient map with $Y$ a bounded subset  of $\R^\kappa.$  We consider a continuous section $f:Y \to \R^\kappa$ such that
\begin{equation*}
g(y):= \max _{j=1,\dots , \kappa } f_j(y),
\end{equation*}
where if $f$  is an intrinsically Lipschitz section of $\pi,$ then $g$ is so too with the same Lipschitz constant (see \cite[Proposition 4.1]{D22.29}). However, we present some result where $f$ is just a continuous section of $\pi.$

Given $y\in Y$ and $t>0,$ we propose to minimize among curves $w:\R^+ \cup \{ 0\} \to \R$ satisfying 
\begin{equation}
w(t)= w(0) +d(f(y) , \pi^{-1} (w(0))),
\end{equation}
 the expression 
 \begin{equation}
\int _0^t L(\dot w (s)) \, ds + g(w(0)),
\end{equation}
which is the action augmented with the value of the initial data. We accordingly now define 
\begin{equation}\label{equationPB17}
v(y,t) := \inf \left\{ \int _0^t L(\dot w(s)) \, ds + g(w(0))\,|\, w(t)= w(0) + d(f(y) , \pi^{-1} (w(0))) \right\},
\end{equation}
the infimum taken over all $C^1$ maps $w(.).$

\begin{theorem}\label{theoremLagrangian19}
If $y\in Y$ and $t>0,$ then the solution $u=u(y,t)$ of the minimization problem \eqref{equationPB17} is
\begin{equation}\label{equationu21}
u(y,t) = \inf_{z\in Y} \left\{ tL\left(\frac{d(f(y), \pi^{-1} (z))}{t} \right)+g(z) \right\}.
\end{equation}
\end{theorem}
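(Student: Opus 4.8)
The plan is to prove the two inequalities $v(y,t)\leq u(y,t)$ and $v(y,t)\geq u(y,t)$ separately, where $v$ is the infimum in the variational problem \eqref{equationPB17} and $u$ is the explicit expression \eqref{equationu21}, mirroring the classical Hopf--Lax argument of \cite[Section 3.3]{E10}. The only property of the intrinsic Lagrangian $L$ that will be invoked is its convexity (one of the three requirements in Section \ref{Properties of L19}); neither the estimate \eqref{equationPorpL} nor the monotonicity \eqref{equationPorpL.2} is needed here.

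For the bound $v(y,t)\leq u(y,t)$ I would fix an arbitrary $z\in Y$ and test the functional against the affine competitor
\[
w(s):=z+\frac{s}{t}\,d(f(y),\pi^{-1}(z)),\qquad s\in[0,t],
\]
extended, say constantly, to all of $\R^+\cup\{0\}$. This $w$ is of class $C^1$, has $w(0)=z\in Y$, and satisfies $w(t)=z+d(f(y),\pi^{-1}(z))=w(0)+d(f(y),\pi^{-1}(w(0)))$, hence is admissible. Since $\dot w\equiv d(f(y),\pi^{-1}(z))/t$ is constant, $\int_0^t L(\dot w(s))\,ds = tL\bigl(d(f(y),\pi^{-1}(z))/t\bigr)$, so $v(y,t)\leq tL\bigl(d(f(y),\pi^{-1}(z))/t\bigr)+g(z)$, and taking the infimum over $z\in Y$ yields $v(y,t)\leq u(y,t)$.

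For the reverse inequality I would take an arbitrary admissible $C^1$ curve $w$ and set $z:=w(0)\in Y$. By the fundamental theorem of calculus and the constraint, $\frac1t\int_0^t\dot w(s)\,ds=\frac{w(t)-w(0)}{t}=\frac{d(f(y),\pi^{-1}(z))}{t}$, so Jensen's inequality applied to the convex function $L$ gives
\[
\frac1t\int_0^t L(\dot w(s))\,ds\;\geq\;L\Bigl(\frac1t\int_0^t\dot w(s)\,ds\Bigr)=L\Bigl(\frac{d(f(y),\pi^{-1}(z))}{t}\Bigr),
\]
whence $\int_0^t L(\dot w(s))\,ds+g(w(0))\geq tL\bigl(d(f(y),\pi^{-1}(z))/t\bigr)+g(z)\geq u(y,t)$. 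Taking the infimum over all admissible $w$ gives $v(y,t)\geq u(y,t)$, and combining the two bounds proves $v(y,t)=u(y,t)$, i.e. $u$ defined by \eqref{equationu21} is the value of the minimization problem \eqref{equationPB17}.

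The computation is short; the points that require care are minor. One must check that the affine curve is genuinely admissible, which reduces to the identity $w(t)=w(0)+d(f(y),\pi^{-1}(w(0)))$ holding precisely because $w(0)=z$, and that $C^1$ regularity is enough to justify both the fundamental theorem of calculus and Jensen's inequality applied to the continuous integrand $L\circ\dot w$. The main conceptual point to highlight — rather than a genuine obstacle — is that, in contrast with the Euclidean case in \cite{E10}, no compactness or attainment statement is required, since both inequalities are obtained directly at the level of infima; in particular neither the boundedness of $Y$ nor any superlinearity of $L$ enters at this stage, those hypotheses being needed only for the subsequent Hamilton--Jacobi subsolution property.
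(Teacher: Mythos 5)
Your proof is correct and mirrors the paper's argument exactly: the upper bound comes from testing the infimum in \eqref{equationPB17} against the affine competitor $\tilde w(s)=z+\tfrac{s}{t}d(f(y),\pi^{-1}(z))$, and the lower bound comes from Jensen's inequality applied to the convex Lagrangian $L$ with $z=w(0)$. The only difference is cosmetic — you spell out the admissibility check and the fundamental theorem of calculus step, and you correctly omit the stray factor $\tfrac{1}{t}$ that appears as a typo on the right-hand side of the paper's final displayed inequality.
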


\begin{proof}
Fix $z\in Y$ and define $\tilde w (s):= z + \frac s t d(f(y) , \pi^{-1} (z))$ for $s\in [0,t].$ By definition of $v$ we deduce that
\begin{equation*}
v(y,t) \leq  \int _0^t L(\dot {\tilde w}(s)) \, ds + g(z) = t L\left(\frac{d(f(y), \pi^{-1} (z))}{t} \right)+g(z),
\end{equation*}
and so $v(y,t) \leq u(y,t).$

On the other hand, if $w(.)$ is any $C^1$ map satisfying $w(t)= w(0) +d(f(y) , \pi^{-1} (w(0)))$, using the convexity of $L,$ we have
\begin{equation*}
L\left( \frac 1 t \int _0^t  \dot { w}(s) \, ds \right) \leq  \frac 1 t \int _0^t L(\dot { w}(s)) \, ds,
\end{equation*}
by Jensen's inequality. Thus if we write $z=w(0)$ we find 
\begin{equation*}
t L\left(\frac{d(f(y), \pi^{-1} (z)) } {t} \right) +g(z) \leq  \frac 1 t \int _0^t L(\dot { w}(s)) \, ds + g(z).
\end{equation*}
Consequently $v(y,t) \geq u(y,t)$ and the proof is complete.
\end{proof}

When $f$ is a bounded map, we get the following observation.

 \begin{prop}\label{theoremHopfLax}
Let $f \in C_b(Y,\R ^\kappa)$ be a section of  $\pi :\R^\kappa \to Y$. Then we have the following basic properties for $u(y,t):$
\begin{description}
\item[ i]  $\inf _{z\in Y} \min_{j=1,\dots , \kappa} f_j \leq u(\cdot, t) \leq \max_{j=1,\dots , \kappa} f_j \leq \sup _{z\in Y} \max_{j=1,\dots , \kappa} f_j< + \infty.$
\item[ ii]  Any  quasi-minimizing sequence $(y_n)_n$ for $u(y,t_n)$  converges to $y$ as $t_n\to 0$ and 
\begin{equation}\label{equation24giugno}
 \pi^{-1} (y_n) \to  \pi^{-1} (y), 
\end{equation} as $n\to \infty .$
%\item[ iii] for any fixed $y\in Y,$ the map $t\mapsto iQ_tf(y)$ is Lipschitz in the classical sense in $(\delta , t_*(y))$ for all $\delta $ such that $0<\delta <t_*(y)$ and the Lipschitz constant depends on $\delta$ and $Osc(f):= \sup \max_{j=1,\dots , \kappa} f_j -\inf \max_{j=1,\dots , \kappa} f_j .$
%\item[ iv]  $2t L \geq iD^+f(y,t) \geq iD^-(y,t),$ for every $y\in Y$ and $t\in \R^+.$
%\item[ iii]  for every $y\in Y$ and $0<t<s < t_*(y),$ it holds: $i\mm^+ f (y,t) \leq i\mm^- f(y,s).$
\end{description}

\end{prop}

\begin{proof} 
$i)$  It is a trivial consequence of the fact that $\frac 1 {2t} d^2(f(z), \pi^{-1} (z)) \geq 0$ and that we can use $z=y$ as a competitor in the infimum of  \eqref{equationu21}.

$ii)$ Fix $y\in Y$ and take a sequence $t_n \to 0$; consider a quasi-minimizing sequence $(y_n)_n$ for $u(y,t_n)$, in the sense that:
\begin{equation*}
u(y,t_n) +\frac 1 n \geq g(y_n) + \frac 1 {2t_n} d^2(f(y) , \pi^{-1} (y_n)), \quad \forall n\in \N .
\end{equation*} %Firstly, note that the uniform bound given by $(i)$ and $f\in C_b(Y, \R^\kappa)$ yields
Indeed,
\begin{equation*}
\begin{aligned}
d^2 ( \pi^{-1} (y) , \pi^{-1} (y_n)) & \leq d^2 (f(y) , \pi^{-1} (y_n))\\
&  \leq 2 t_n \left( u(y,t_n) +\frac 1 n - g(y_n) \right) \\
& \leq 2t_n \left(2\|f \|_\infty +\frac 1 n \right) \,\, \longrightarrow _{n\to \infty } 0.
\end{aligned}
\end{equation*}
%and so 
%\begin{equation*}
% \pi^{-1} (y_n)  \to  \pi^{-1}(a) \ni f(y).
%\end{equation*}
%Moreover, notice that $\pi(f(y))=y$ and using the fact that $f$ is a continuous section, we get $y=a.$
 Consequently, \eqref{equation24giugno} holds by continuity of $f$.

\end{proof}

\begin{prop}\label{proptransformFL}
The function $u$ defined as in \eqref{equationu21} satisfies the following properties:
\begin{enumerate}
\item If $Y$ is bounded, it holds
\begin{equation*}
|u(x,t) -u(y,t)| \leq  2K\sqrt L\left(\frac{d(f(y),f(x))}{t}\right), \quad \forall x,y \in Y, \, t>0, \end{equation*} where $K>0$ is given by \eqref{costuniversale}.
\item If $Y$ is bounded, it holds
\begin{equation*}
u(y,t) \leq 2K \sqrt L\left(\frac{d(f(y),f(x))}{t}\right)+u(x,s), \quad \forall x,y \in Y, \, 0<s<t. \end{equation*}% where $K>0$ is given by \eqref{costuniversale}.
\item If $f$ is an intrinsically Lipschitz map, then
\begin{equation*}
u=g, \quad \mbox{ on } Y\times \{t=0\}.
\end{equation*}
\item it holds for any $s,t \in \R^+$ such that $s<t$
\begin{equation*}
u(y,t)-u(y,s) \leq 0,
\end{equation*}
for every $y\in Y.$
\end{enumerate}

\end{prop}

\begin{proof}
$(1).$ Let $x,y \in Y$ and $t>0.$ We choose $z\in Y$ such that $u(y,t) = tL\left(\frac{d(f(y), \pi^{-1} (z))}{t} \right)+g(z)$ and so by \eqref{equationPorpL} we get 
\begin{equation*}
\begin{aligned}
u(x,t) - u(y,t ) & = \inf_{h\in Y} \left\{ tL\left(\frac{d(f(x), \pi^{-1} (h))}{t} \right)+g(h ) \right\} - tL\left(\frac{d(f(y), \pi^{-1} (z))}{t} \right)-g(z)\\
& \leq  t \left(L\left(\frac{d(f(x), \pi^{-1} (z))}{t} \right)- L\left(\frac{d(f(y), \pi^{-1} (z))}{t} \right) \right)\\
& \leq 2K\sqrt L\left(\frac{d(f(y),f(x))}{t}\right).
\end{aligned}
\end{equation*}
Hence, interchanging the roles of $x$ and $y$  we have the thesis.

$(2).$   Let $x,y \in Y$ and $0<s<t.$ We choose $z\in Y$ such that $u(x,t) = tL\left(\frac{d(f(x), \pi^{-1} (z))}{t} \right)+g(z)$ and so by \eqref{equationPorpL} we get 
\begin{equation*}
\begin{aligned}
u(y,t) - u(x,s )  %& = \inf_{h\in Y} \left\{ tL\left(\frac{d(f(y), \pi^{-1} (h))}{t} \right)+g(h ) \right\} - tL\left(\frac{d(f(y), \pi^{-1} (z))}{t} \right)-g(z)\\
& \leq  t L\left(\frac{d(f(y), \pi^{-1} (z))}{t} \right)- s L\left(\frac{d(f(x), \pi^{-1} (z))}{s} \right)\\
& \leq 2K  \sqrt L\left(\frac{d(f(y),f(x))}{s}\right)
\end{aligned}
\end{equation*}
where in the last inequality we used \eqref{equationPorpL.2} and \eqref{equationPorpL}.

$(3).$  Let $x \in Y$ and $t>0.$ Choosing $z=x$ we obtain
\begin{equation}\label{equation29}
u(x,t) \leq tL(0) +g(x).
\end{equation}
Moreover, using the intrinsic Lipschitz property of $f$ and so of $g$ we have that
\begin{equation*}
\begin{aligned}
u(x,t) & = \inf_{z\in Y} \left\{ tL\left(\frac{d(f(x), \pi^{-1} (z))}{t} \right)+g(z) \pm g(x) \right\}\\
& \geq g(x) +  \inf_{z\in Y} \left\{- Ils(f) d(f(y), \pi^{-1} (z))+ tL\left(\frac{d(f(y), \pi^{-1} (z))}{t} \right) \right\}\\
& = g(x) -t \max_{\substack{ z\in Y\, \\ w= \frac{d(f(y), \pi^{-1} (z))}{t} }}  ( Ils(f) w- L\left(w\right)) \\
& = g(x)  -t \max_{\xi \in [0,Ils(f)]} \max_{\substack{ w= \frac{d(f(y), \pi^{-1} (z))}{t} }}  ( \xi w- L\left(w\right)) \\
& =: g(x)  -t \max_{\xi \in [0,Ils(f)]} H(\xi),\\
\end{aligned}
\end{equation*}  where in the first inequality we used the fact that if $f$  is an intrinsically Lipschitz section of $\pi,$ then $g$ is so too. 
In the next section we explain what is $H;$ more precisely, it is the Hamiltonian associated by $L$ in our intrinsic context. Finally, putting together the last inequality and \eqref{equation29}, it holds
\begin{equation*}
|u(x,t) - g(x)| \leq \max\{ |L(0)| , \max _{\xi \in [0,Ils(f)]} |H(\xi)|\} t =: C t,
\end{equation*}
which implies that $u=g$ on $Y \times \{ t=0 \},$ as desired.

$(4).$  Let $y \in Y$ and $t>0.$ Choosing $z\in Y$ such that 
\begin{equation}\label{equation29}
u(y,s) = sL\left(\frac{d(f(y), \pi^{-1} (z))}{s}\right)  g(z).
\end{equation}
As a consequence, by \eqref{equationPorpL.2}, we deduce that
\begin{equation*}
\begin{aligned}
u(y,t) - u(y,s )  & = \inf_{h\in Y} \left\{ tL\left(\frac{d(f(y), \pi^{-1} (h))}{t} \right)+g(h ) \right\} - sL\left(\frac{d(f(y), \pi^{-1} (z))}{s} \right)-g(z)\\
& \leq  t L\left(\frac{d(f(y), \pi^{-1} (z))}{t} \right)- s L\left(\frac{d(f(y), \pi^{-1} (z))}{s} \right)\\
& \leq 0,
\end{aligned}
\end{equation*} as wished. 
\end{proof}

\begin{rem}
When $L(v)=v^2$ for any $v,$ we have that at every point $x$ where $f$ is differentiable in the classical case, then $u$ is so too. Indeed, thanks to Proposition \ref{proptransformFL} (1), for $y=x+hv$ with $h\in \R$ and $v\in \R^\kappa$ we have 
 \begin{equation*}
\frac{ u(x,t) -u(x+hv,t)}{h} \leq  \frac{2K}{t} \frac{d(f(x),f(x+hv))}{h}.
 \end{equation*} 
\end{rem}

 \subsection{The intrinsic Fenchel-Legendre transform}\label{The intrinsic Fenchel-Legendre transform}  
 It is interesting to notice that in the proof or Proposition \ref{proptransformFL} (3) we have a sort of the Fenchel-Legendre transform in our intrinsic context.

Following \cite{DB09, E10}, we define
\begin{defi}
Let $y\in Y, t>0$ and  $f: Y \to \R^\kappa$ be a continuous section of $\pi.$ The intrinsic Fenchel-Legendre transform of $L$ is a map $L^*: [0,Ils(f)] \to \R$ given by 
\begin{equation}
 L^*(\xi)= L^*_{f, y,t}(\xi):= \sup_{\substack{ z\in Y\, \\ w= \frac{d(f(y), \pi^{-1} (z))}{t} \,  \in \R^+ } } ( \xi w- L\left(w\right)).
\end{equation}
\end{defi}
As in classical way, we define the intrinsic Hamiltonian $H$ associated by $L$ as the  intrinsic Fenchel-Legendre transform of $L$ and so $$H(\xi) :=L^*(\xi), $$  for any $\xi \in [0,Ils(f)].$  Notice that since $\xi \in [0,Ils(f)],$ it makes no sense to consider properties of $H$ when $|\xi| \to \infty,$ as in the usual case. However, inspired by \cite[Theorem 3]{E10}, we give the following properties of this notion.
  \begin{prop}\label{propertiesFLtransformata}
Let $y\in Y, t>0$ and  $f: Y \to \R^\kappa$ be a continuous section of $\pi.$  Then,
\begin{enumerate}
\item If $Y$ is bounded set, then $L^*(\xi) = \xi \frac K t - min_{\substack{ w= \frac{d(f(y), \pi^{-1} (z))}{t} }} L(w).$
\item $H$ is a convex map$;$
\item $L=H^*.$
\end{enumerate}
Thus $H$ is the intrinsic Fenchel-Legendre transform of $L$ and vice versa.
\end{prop}

  \begin{proof}
  
  (1). This follows recall the definition of $K$ (see \eqref{costuniversale}).
  
(2). For each fixed $v,$ the function $p \mapsto p\cdot v -L(v)$ is linear; and consequently, the mapping $$ p\mapsto H(p)=L^*(p) =\sup _{v \in \R^+} pv - L (v),$$ is convex. Indeed, if $\tau \in [0,1]$ and $p,q \in \R,$ it holds
\begin{equation*}
\begin{aligned}
H(\tau p+(1-\tau ) q) &= \sup _v \left( (\tau p+(1-\tau ) q) \right) v - L(v)\\
& \leq \tau \sup _v (pv -L(v)) + (1-\tau ) \sup_v (qv-L(v))\\
& = \tau H(p) + (1-\tau ) H(q),
\end{aligned}
\end{equation*}
as desired.

(3). We prove the inequality $(\geq)$. By $H(p)=L^*(p)=\sup_v (pv-L(v)),$ we get that
\begin{equation*}
H(p)+L(v) \geq pv, \quad \forall p,v \in \R^+
\end{equation*}
and consequently
\begin{equation*}
L(v) \geq \sup_p (pv-H(p)) =H^*(p).
\end{equation*}
On the other hand, the second inequality $(\leq)$ follows noting
\begin{equation}\label{22equationPOI}
\begin{aligned}
H^*(v) &=\sup_p (pv -\sup _r (pr-L(r)))\\
&=\sup_p \inf _r (p(v-r) +L(r)).
\end{aligned}
\end{equation}
Now since $v \mapsto L(v)$ is convex, according with the Section B.1 in  \cite{E10} we know that there is $\ell \in \R$ such that
\begin{equation*}\label{22equationPOI.2}
\begin{aligned}
L(r) \geq L(v) + \ell (r-v), \quad \mbox {with } r \in \R .
\end{aligned}
\end{equation*}
Choosing $p=\ell$ in \eqref{22equationPOI} and using the last inequality, we deduce that
\begin{equation*}
H^*(v)\geq \inf _r (\ell (v-r) +L(r)) \geq L(v),
\end{equation*}
and this complete the proof of the statement.

\end{proof}

 \section{Hamilton-Jacobi type equation} 
 In this section we show that  the 'symmetrized'  Hopf-Lax semigroup is a subsolution of Hamilton-Jacobi type  equation. Here we consider the model case, i.e., when $L(v)=v^2$ and so 
 \begin{equation*}
u(y,t) = \inf_{z\in Y} \left\{ \frac{d^2(f(y), \pi^{-1} (z))}{2t}+g(z) \right\}.
\end{equation*}

 \subsection{ Semicontinuity of $i\mm^\pm$ }  
 
Given a continuous section $f:Y \to \R^\kappa $ of $\pi$, $u(y,t)$ is then defined by the minimum problem \eqref{equationu21}.
We also define:
\begin{equation}
\begin{aligned}
i\mm^+ f(y,t) & := \sup \left\{ \limsup_{n\to \infty}  d(f(y), \pi^{-1} (y_n)) \, :\, (y_n)_n \mbox{ is a minimizing sequence in } \eqref{equationu21} \right\}\\
i\mm^- f(y,t) & := \inf \left\{ \liminf_{n\to \infty}  d(f(y), \pi^{-1} (y_n)) \, :\, (y_n)_n \mbox{ is a minimizing sequence in  }\eqref{equationu21} \right\}\\
\end{aligned}
\end{equation}

The map $(y,t) \mapsto u(y,t)$, $Y\times (0,\infty) \to \R \cup \{ \pm \infty\}$ is obviously upper semicontinuous. The behavior of $u(\cdot, t)$ is not trivial only in the set
\begin{equation*}
\{ y\in Y \,:\,  d(f(y), \pi^{-1} (z)) <\infty \mbox{ for some $z\in Y$ with } g(z)<\infty \},
\end{equation*}
and so we shall restrict our analysis in this set. In particular, it is sufficient to ask that $f$ is a bounded section and $Y$ is a bounded subset of $\R^\kappa$ (see \eqref{costuniversale}). Moreover, in this set, $u(y,t )\in \R \cup \{ -\infty\}$ and so we also define
\begin{equation*}
t_*(y) := \sup \{ t\in \R^+\,:\, u(y,t) >-\infty \},
\end{equation*}
 with the convention $t_*(y)=0$ if $u(y,t) =-\infty $ for all $t>0.$

Notice that by \eqref{costuniversale}, if $Y$ is bounded, then $i\mm^+ f <+\infty .$ Moreover, in general we have that
\begin{equation*}
i\mm^+ f (y,t) \geq i\mm^- f(y,t), \quad \forall y\in Y, t\in \R^+.
\end{equation*} 
Regarding the other inequality we have the following result.
 \begin{prop}\label{theoremHopfLax}
Let $f \in C_b(Y,\R ^\kappa)$ be a section of  $\pi :\R^\kappa \to Y$. Then for every $y\in Y$ and $0<t<s < t_*(y),$ it holds: $i\mm^+ f (y,t) \leq i\mm^- f(y,s).$
\end{prop}

\begin{proof} 
 Fix $0<t<s$ and $y\in Y.$ Let's make this proof under the additional condition that the infimum in $u(y,t)$ and in $u(y,s)$ are both attained and so they are minima (if not one should arrange a bit the proof but it is mainly the same idea). Hence take $y_t, y_s$ minima related respectively to $u(y,t)$ and to $u(y,s)$  and so, we deduce that
  \begin{equation*}
\begin{aligned}
g(y_t) +\frac 1 {2t} d^2(f(y), \pi^{-1} (y_t)) & \leq  g(y_s) +\frac 1 {2t} d^2(f(y), \pi^{-1} (y_s)),\\
g(y_s) +\frac 1 {2s} d^2(f(y), \pi^{-1} (y_s)) & \leq  g(y_t) +\frac 1 {2s} d^2(f(y), \pi^{-1} (y_t)).\\
\end{aligned}
\end{equation*}
Summing up the previous equations, it holds
  \begin{equation*}
\begin{aligned}
\left(\frac 1 {2t} - \frac 1 {2s} \right) d^2(f(y), \pi^{-1} (y_t)) & \leq \left(\frac 1 {2t} - \frac 1 {2s} \right)  d^2(f(y), \pi^{-1} (y_s)),\\
\end{aligned}
\end{equation*}
and so, recall that $s>t$ and $1/s < 1/t,$ we obtain
\begin{equation*}
d^2(f(y), \pi^{-1} (y_t)) \leq d^2(f(y), \pi^{-1} (y_s)).
\end{equation*}
Now let the square root in the last inequality, $d(f(y), \pi^{-1} (y_t)) \leq d(f(y), \pi^{-1} (y_s))$ holds. More precisely, $$d(f(y), \pi^{-1} (y_t)) \leq d(f(y), \pi^{-1} (y_s))$$ is true for every choice $(y_s, y_t)$ into the class of minimizers of $u(y,s)$ and $u(y,t),$ respectively. This gives us the sought inequality and the proof of the statement is complete.

\end{proof}

 \begin{prop}[Semicontinuity of $i\mm^\pm$]\label{propSEMICONT} Let $y_n\to y$ and $t_n \to t \in (0,t_*(y)).$ Then,
 \begin{equation*}
\begin{aligned}
i\mm^- f(y,t) &\leq \liminf_{n\to \infty}  i\mm^- f(y_n,t_n), \\
i\mm^+ f(y,t) &\geq \limsup_{n\to \infty}  i\mm^+ f(y_n,t_n). \\
\end{aligned}
\end{equation*}
In particular, for every $y\in Y$ the map $t\mapsto i\mm^-f(y,t)$ is left continuous in $(0,t_*(y))$ and the map  $t\mapsto i\mm^+f(y,t)$ is right continuous in $(0, t_*(y)).$
\end{prop}

  \begin{proof} 
  We adapt the proof as in \cite[Proposition 3.2]{AGS08.2}. For every $n\in \N,$ let $(y_n^\ell)_{\ell \in \N}$ be a minimizing sequence for $u( y_n,t_n)$ for which the limit of $d(f(y_n), \pi^{-1} (y^\ell_n)) $ as $\ell \to \infty $ equals $i\mm^- (y_n,t_n).$ By Remark \ref{costuniversale}, $\sup _{\ell, n}  d(f(y_n), \pi^{-1} (y_n^\ell)) <+\infty $ and for any $n$ we have that
  \begin{equation*}
\lim_{\ell \to \infty} f(y^\ell_n) +\frac 1 {2t_n}   d^2(f(y_n), \pi^{-1} (y^\ell_n)) = u(y_n, t_n).
\end{equation*}
Moreover, the upper semicontinuity of $(y,t) \mapsto u(y,t)$ gives that $\limsup _n u(x_n, t_n) \leq u(y,t).$ Since $ d(f(y_n), \pi^{-1} (y^\ell_n)) $ is bounded, it follows that $$\sup_\ell | d^2(f(y_n), \pi^{-1} (y^\ell_n))  -  d^2(f(y), \pi^{-1} (y_n^\ell)) | $$ is infinitesimal and so by a diagonal argument we can find a sequence $n\mapsto \ell (n) $ such that
 \begin{equation*}
\begin{aligned}
& \limsup_{ n\to \infty} f(y_n^{\ell(n)})  + \frac 1 {2t}  d^2(f(y), \pi^{-1} (y_n^{\ell(n)})) \leq u(y,t),\\
& | d(f(y_n)), \pi^{-1} (y^{\ell(n)}_n))  -  i\mm^-(y_n, t_n) |  \leq \frac 1 n .\\
\end{aligned}
\end{equation*}
This implies that $y\mapsto y^{\ell (n)}_n $ is a minimizing sequence for $u(y,t),$ therefore 
\begin{equation*}
\mm^- f(y,t) \leq \liminf_{ n\to \infty} d(f(y), \pi^{-1} (y_n^{\ell(n)})) = \liminf_{ n\to \infty} d(f(y_n), \pi^{-1} (y_n^{\ell(n)}))  = \liminf_{ n\to \infty} \mm^-f(y_n, t_n).
\end{equation*}
Notice that in the equality we used that fact that $y_n \to y$ which we will prove in Proposition \ref{theoremHopfLax} (ii).  
In a similar way, if we choose instead sequence $(y^\ell_n)_\ell$ on which the supremum in the definition of $i\mm^+f(y_n, t_n)$ is attained, we obtain the upper semicontinuity property of $i\mm^+f$.

  \end{proof}

  We conclude this section with an easy result when $f$ is an intrinsically Lipschitz section. The proof is similar to \cite[Proposition 3.4]{D22Hopf}.
  
 \begin{prop}\label{theoremHopfLax.2}
Let $f \in ILS(Y, \R ^\kappa)$ and let $L \geq 1$ be its Lipschitz constant. Then, 
$$2t L \geq i\mm^+f(y,t) \geq i\mm^-f(y,t),$$ for every $y\in Y$ and $t\in \R^+.$

\end{prop}

\begin{proof}
We can suppose $u(y,t)< g (y);$ indeed, if not, it must be $$u (y,t) =g(y) \quad  \Rightarrow \quad i\mm^+ f(y,t) =0.$$ Hence we can take a minimizing sequence $(y_n)_n$ for $u(y,t)$ so that definitively 
\begin{equation*}
g(y_n) +\frac 1 {2t} d^2(f(y), \pi^{-1} (y_n)) \leq g(y).
\end{equation*}
%Using the fact that $f$ in an intrinsically $L$-Lipschitz sections, it follows that
Then,  \begin{equation*}
d^2(f(y), \pi^{-1} (y_n))  \leq 2t d(f(y_n), f(y)) \leq 2t L d(f(y), \pi^{-1} (y_n)),
\end{equation*} where in the second inequality we used the Lipschitz property of $f.$ Finally,  dividing for $d(f(y), \pi^{-1} (y_n))$ and taking the limsup in $n$, the thesis follows.
 \end{proof}

 \subsection{The time derivative of $u(\cdot, t)$}
We find a precise estimate of the time derivative of the Hopf-Lax semigroup in terms of $i\mm^\pm f(y,t);$ in order to do this fact, we give an alternative proof of Lipschitz property of $u(\cdot, t ).$ Moreover, we recall that semiconcave map $g$ on an open interval are local quadratic perturbations of concave maps; they inherit from concave functions all pointwise differentiability properties, as existence of right and left derivatives $\frac{d^-}{ dt} g \geq \frac{d^+}{ dt} g$ which is important for the next result.
  \begin{prop}[Time derivative of $u(\cdot, t)$]\label{theoremHopfLaxFormulanew}
The map $(0,t_*(y)) \ni t \mapsto u(\cdot, t)$ is locally Lipschitz and locally semiconcave. For all $t\in (0,t_*(y))$ it satisfies  
\begin{equation}\label{equationIMPO}
\begin{aligned}
\frac{d^-}{ dt} u(y,t) & =- \frac{ (i\mm^-f(y,t) )^2}{ 2t^2},\\
\frac{d^+}{ dt} u(y,t) & =- \frac{ (i\mm^+f(y,t) )^2}{ 2t^2},\\
\end{aligned}
\end{equation} In particular, $t\mapsto u(\cdot, t)$ is differentiable at $t\in (0, t_*(y))$ if and only if $i\mm^+f(y,t)= i\mm^-f(y,t).$
\end{prop}

   \begin{proof} We follows \cite[Proposition 3.3]{AGS08.2} (see also \cite[Proposition 3.9]{D22Hopf}). Let $(y_t^n)_n, (y_s^n)_n$ be minimizing sequences for $u(y,t)$ and $u(y,s)$. We have
   \begin{equation*}\label{equationIMPO.81}
\begin{aligned}
u(y,s) - u(y,t) & \leq  \liminf_{ n \to \infty } \frac{ d^2(f(y)), \pi^{-1} (y_t^n))}{2}\left( \frac 1 s - \frac 1 t \right),\\
 u(y,s) - u(y,t) & \geq  \limsup_{ n \to \infty } \frac{ d^2(f(y)), \pi^{-1} (y_s^n))}{2}\left( \frac 1 s - \frac 1 t \right).
\end{aligned}
\end{equation*} 
Now we have two cases:  $s>t$ or vice versa. In the first case, we get
\begin{equation}\label{equationIMPO.83}
\begin{aligned}
 \frac{ (i\mm^- f(y,s))^2}{2}\left( \frac 1 s - \frac 1 t \right) \leq  u(y,t) - u(y,s) & \leq  \frac{  (i\mm^+ f(y,t))^2 }{2}\left( \frac 1 s - \frac 1 t \right),
\end{aligned}
\end{equation} 
 recalling that $\lim_{s\to t} i\mm^-f(y,s) =i\mm^+f(y,t),$ a division by $s-t$ (noting that $1/s-1/t=-(s-t)/st$) and a limit as $s\to t $ gives the identity for the right derivative in \eqref{equationIMPO}. In a similar way, we can obtain the left derivative.
 
 Moreover, the local Lipschitz continuity follows by \eqref{equationIMPO.83} recalling that $i\mm^\pm f(y,\cdot )$ are locally bounded functions; we easily get the quantitative bound 
 \begin{equation*}
\left\|\frac d {dt} u(y,t) \right\|_{L^\infty (\tau_1, \tau _2)} \leq \frac 1 {2\tau_1^2} \| i\mm^+f(y,\cdot)\|_{L^\infty (\tau_1, \tau _2)},
\end{equation*}
for every $0<\tau_1 < \tau_2 < t_*(y).$ Finally, since the distributional derivative of the map $t\mapsto (i\mm^+f(y,t))^2 /(2t^2)$ is locally bounded from below, we also deduce that $t\mapsto u(y,\cdot)$ is locally semiconcave, as desired. Hence, the proof is complete.
 \end{proof}
 
   \begin{rem}\label{coroll9l}
We want to underline that since $Y\subset \R^\kappa$ is bounded, then the map $(0,t_*(y)) \ni t \mapsto u(y,t)$ is $globally$ Lipschitz. 

  This fact follows from \eqref{equationIMPO.83}, noticing that $iD^\pm f(y,\cdot )$ are globally bounded functions by Remark \ref{costuniversale}.
\end{rem}

 \begin{prop}\label{theoremHJstep1}
Let $f:Y \to \R^\kappa$ be a continuous section of $\pi.$ Then, for $t\in (0, t_*(y)) $ it holds
  \begin{equation}\label{equation3.13ab}
\begin{aligned}
 \limsup_{y\to z } \frac{u(z,t)-u(y,t) }{d(f(y), f (z))}\leq \frac{i\mm^+f(z,t)} {2t}\\
\end{aligned}
\end{equation}
\end{prop}

 \begin{proof}
We use a similar technique as in \cite[Proposition 3.4]{AGS08.2} (see also \cite[Proposition 3.11]{D22Hopf}).

Let $y,z \in Y$ such that $u(y,t) >-\infty.$ We want to show that
\begin{equation}\label{equation3.14}
u(z,t)-u(y,t)  \leq \frac {d(f(z), f (y))}{2t} \left( i\mm^-f(y,t) +  d(f(z), \pi^{-1} (y)) \right).
\end{equation}
Let $(y_n)_n$ be a minimizing sequence for $u(y,t)$ on which the infimum in the definition of $i\mm^-f(y,t)$ is attained, obtaining 
   \begin{equation*}
\begin{aligned}
u(z,t)-u(y,t) & \leq \liminf _{ n\to \infty} \frac{d^2(f(z), \pi^{-1} (y_n))} {2t}  -  \frac{d^2(f(y), \pi^{-1} (y_n))} {2t}\\
& \leq \liminf _{ n\to \infty} \frac{d(f(y), f (z))} {2t} ( d(f(y), \pi^{-1} (y_n))  + d(f(z), \pi^{-1} (y_n))  ) \\
& \leq \frac{d(f(y), f (z))} {2t} ( d(f(z), \pi^{-1} (y)) + i\mm^-f(y,t)),
\end{aligned}
\end{equation*}
where in the last inequality we used the fact that $ \pi^{-1} (y_n) \to  \pi^{-1} (y)$ (see \eqref{equation24giugno}). Hence \eqref{equation3.14} holds. Now dividing both sides of \eqref{equation3.14} by $d(f(y), f (z))$ and taking the $\limsup$ as $y\to z$ we get \eqref{equation3.13ab}, since Proposition \ref{propSEMICONT} yields the upper-semicontinuity of $i\mm^+f.$ 
\end{proof}

  \subsection{$u(\cdot , t)$ as a subsolution of Hamilton-Jacobi type inequality}
In our case, we don't know if $u(\cdot , t)$ is a subsolution of Hamilton-Jacobi inequality as in \cite[Theorem 3.5]{AGS08.2}; however, we have the following corollary.
 \begin{coroll}\label{provaHJno}
Let $f:Y \to \R^\kappa$ be a continuous section of $\pi$ with $Y\subset \R^\kappa$ bounded. Then, for $t\in (0, t_*(y)) $ it holds
  \begin{equation*}
\begin{aligned}
\frac{d^+}{ dt} u(y,t) +  2\limsup_{y\to z }\left( \frac{u(z,t) -u(y,t)}{d(f(y), f (z))} \right)^2 \leq 0.
\end{aligned}
\end{equation*}
\end{coroll}

 \begin{proof}
It is enough to consider Proposition \ref{theoremHopfLaxFormulanew} and \ref{theoremHJstep1}.
 \end{proof}

We conclude this section given a similar result as in \cite{D22Hopf}. Here, we need the Lipschitz property of $f$ in order to get the same statement. This result represents a difference with the case studied in \cite{D22Hopf}; indeed, in  \cite{D22Hopf}, it does not require the additional hypothesis of Lipschitzianity on $f$.
 \begin{coroll}\label{provaHJno.00}
Let $f:Y \to \R^\kappa$ be an intrinsically Lipschitz section of $\pi$ with $Y\subset \R^\kappa$ bounded. Then, for $t\in (0, t_*(y)) $ it holds
  \begin{equation*}
\begin{aligned}
\frac{d^+}{ dt} u(y,t) +  \frac 2 { Ils^2(f) } \limsup_{y\to z }\left( \frac{u(z,t) -u(y,t)}{d(f(y), \pi^{-1}(z))} \right)^2 \leq 0.
\end{aligned}
\end{equation*}
\end{coroll}

 \begin{proof}
The statement follows using \eqref{equation3.14}.
 \end{proof}

 \bibliographystyle{alpha}
\bibliography{DDLD}

\end{document}